\newcommand{\vh}{\mathcal{VH}}
\newcommand{\iv}{^{-1}}
\newcommand{\field}[1]{\mathbb{#1}}
\newcommand{\integers}{\ensuremath{\field{Z}}}
\newcommand{\Euclidean}{\ensuremath{\field{E}}}
\DeclareMathOperator{\link}{link}
\newcommand{\showcomments}{yes}
\renewcommand{\showcomments}{no}
\newsavebox{\commentbox}
\newtheorem{thm}{Theorem}[section]
\newtheorem{lem}[thm]{Lemma}
\theoremstyle{definition}
\newtheorem{defn}[thm]{Definition}
\newtheorem{exmp}[thm]{Example}
\begin{document}
\title{Polygonal $\vh$ complexes}
\author[J.K.C.Polak]
{Jason~K.C.~Pol\'ak }
           \address{Math. \& Stats.\\
                    McGill Univ. \\
                    Montreal, QC, Canada H3A 2K6 }
           \email{jason@jpolak.org}
\author[D.~T.~Wise$^\ddag$]
{Daniel T. Wise}
           \address{Math. \& Stats.\\
                    McGill Univ. \\
                    Montreal, QC, Canada H3A 2K6 }
           \email{wise@math.mcgill.ca}
\thanks{Supported by NSERC}

\begin{abstract}
Ian Leary inquires whether a class of hyperbolic finitely presented groups are residually finite. We answer in the affirmative by giving a systematic version of a construction in his paper, which shows that the standard $2$-complexes of these presentations have a $\vh$-structure. This structure induces a splitting of these groups, which together with hyperbolicity, implies that these groups are residually finite.
\end{abstract}
\date{\today}
\maketitle

\section{Introduction}
Recall that a group is \emph{residually finite} if every nontrivial element maps to a nontrivial element in a finite quotient. For instance, $GL_n(\integers)$ is residually finite, and hence so too are its subgroups. Ian Leary describes the following group $G$ in~\cite{Leary_KanThurston}:
\begin{equation}\label{eqn:leary1}
	\left\langle a,b,c,d,e,f  \bigg|
  \begin{array}{ccc}
     abcdef, & ab\iv c^2 f\iv e^2 d\iv, & a^2fc^2bed, \\
    ad^{-2}cb^{-2}ef\iv, & af^{-2}cd\iv eb^{-2}, & ad^2cf^2eb^2
  \end{array}
\right\rangle
\end{equation}
Leary shows that $G\cong \pi_1X$ where $X$ is a nonpositively curved square complex. He constructs $X$ by subdividing the standard 2-complex of Presentation~\eqref{eqn:leary1}. Leary asks whether $G$ and some similar groups are residually finite, and he reports that his investigations with the GAP software indicated that $G$ has few low-index subgroups. We note that $G$ is perfect -- which is part of the reason Leary was drawn to investigate groups like $G$ in conjunction with his Kan-Thurston generalization.

The main goal of this note is to explicitly describe conditions under which $2$-complexes such as $X$ can be subdivided into nonpositively curved $\vh$-complexes. This develops the ad-hoc method described by Leary. Moreover, it seems that Leary's method might have been guided  by the theory of $\vh$ complexes, and revealing the lurking $\vh$-structure allows us to answer Leary's question. Indeed, since $X$ is a compact nonpositively curved $\vh$-complex, $\pi_1 X = G$ has a so-called quasiconvex hierarchy and since $G$ is word-hyperbolic, it is virtually special and hence residually finite via results from \cite{WiseIsraelHierarchy}.

We now describe the parts of this paper. In Section~\ref{sec:vhcomplexes}, we review material about nonpositively curved $\vh$-complexes. In Section~\ref{sec:squaring}, we describe a criterion for subdividing certain $2$-complex into a $\vh$-complex. Our \emph{squaring construction} is a systematic version of the construction suggested by Leary. Finally, we illustrate this technique in Section~\ref{sec:examples} where we show that Leary's groups are residually finite.

\section{Nonpositively curved $\vh$-complexes}\label{sec:vhcomplexes}
A \emph{square complex} $X$ is a combinatorial $2$-complex whose $2$-cells are squares in the sense that their attaching maps are closed length~$4$ paths in $X^1$. We say $X$ is \emph{nonpositively curved} if $\link(x)$ has girth $\geq 4$ for each $x\in X^0$. Recall that the \emph{link} of a 0-cell $x$ is the graph whose vertices correspond to corners of $1$-cells incident with $x$, and whose edges correspond to corners of $2$-cells incident with $x$. Roughly speaking, $\link(x)$ is isomorphic to the $\epsilon$-sphere about $x$ in $X$.

A \emph{$\vh$-complex} $X$ is a square complex such that the $1$-cells of $X$ are partitioned into two disjoint sets $H$ and $V$ called \emph{horizontal} and \emph{vertical} respectively,
and the attaching map of every $2$-cell is a length~$4$ path that alternates between vertical and horizontal $1$-cells.

A $\vh$-complex is nonpositively curved if and only if there are no length~$2$ cycles in each $\link(x)$ -- indeed,
 each link is bipartite because of the $\vh$~structure. The main result that we will need about $\vh$-complexes is the following result which is a specific case of the main theorem in \cite{WiseIsraelHierarchy}:

\begin{thm}\label{thm:from ultra}
If $X$ is a compact nonpositively curved $\vh$-complex such that $\pi_1X$ is word-hyperbolic then $X$ is virtually special.
Consequently, $\pi_1X$ embeds in $GL_n(\integers)$, and so $\pi_1X$ is residually finite.
\end{thm}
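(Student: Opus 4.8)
The plan is to read off a quasiconvex hierarchy for $\pi_1 X$ directly from the $\vh$-structure, apply the main theorem of \cite{WiseIsraelHierarchy} to conclude that $X$ is virtually special, and then invoke the linearity of special groups to obtain residual finiteness. So the real work is in the first step; the rest is bookkeeping together with one deep cited input.

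To build the hierarchy, first subdivide if necessary (this does not change $\pi_1 X$) so that the hyperplanes of $X$ are embedded and two-sided. Since $X$ is compact and nonpositively curved, $\widetilde X$ is a $\cat$ square complex, and the $\vh$-partition of $1$-cells splits its hyperplanes into a horizontal family and a vertical family; because every square alternates between horizontal and vertical edges, a hyperplane dual to a vertical edge meets only vertical edges, and hence no vertical hyperplane crosses a horizontal one. In particular each vertical hyperplane is $1$-dimensional, so the image in $X$ of a vertical hyperplane is a finite graph and its fundamental group is finitely generated and free. Cutting $X$ along one such vertical hyperplane $Y$ writes $\pi_1 X$ as $\pi_1 X_1 *_{\pi_1 Y} \pi_1 X_2$ or as an HNN extension $\pi_1 X' *_{\pi_1 Y}$, where the cut-open complexes again carry compact nonpositively curved $\vh$-structures but with strictly fewer squares. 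Iterating, the procedure terminates at a disjoint union of finite graphs, whose free fundamental groups carry the evident hierarchy (split off one rank at a time). Thus $\pi_1 X$ is built from trivial groups by finitely many amalgamations and HNN extensions, each edge group finitely generated and free.

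Next I would upgrade this to a \emph{quasiconvex} hierarchy under the hypothesis that $\pi_1 X$ is word-hyperbolic. Hyperplane carriers are convex subcomplexes of the $\cat$ complex $\widetilde X$, so each hyperplane subgroup is quasi-isometrically embedded and therefore quasiconvex in $\pi_1 X$; moreover, in a splitting of a word-hyperbolic group over a quasiconvex subgroup the vertex groups are themselves quasiconvex, hence word-hyperbolic, so hyperbolicity and quasiconvexity of edge groups are inherited at every stage of the recursion. This makes the hierarchy above a quasiconvex hierarchy in the sense of \cite{WiseIsraelHierarchy}, whose main theorem then yields that $X$ is virtually special.

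Finally I would extract the stated consequences. Being special means the fundamental group of a finite-index subcomplex embeds in a right-angled Artin group, and right-angled Artin groups embed in $\GL_m(\integers)$; so some finite-index subgroup $H \leq \pi_1 X$ is $\integers$-linear. Since $\integers$-linearity passes to finite extensions---induce a faithful finite-dimensional $\integers$-representation of $H$ up to $\pi_1 X$, noting the induced representation is again faithful because the original kernel is trivial---it follows that $\pi_1 X$ embeds in $\GL_n(\integers)$ for some $n$. As $\GL_n(\integers)$ is residually finite (separate any two matrices by reducing modulo a large prime) and residual finiteness is inherited by subgroups, $\pi_1 X$ is residually finite. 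The one genuine obstacle is the cited input---turning a quasiconvex hierarchy into virtual specialness is exactly the hard theorem of \cite{WiseIsraelHierarchy}---while the only subtlety in my own argument is checking that each cut preserves word-hyperbolicity and quasiconvexity of the edge groups, so that the recursion stays within the class of groups to which that theorem applies.
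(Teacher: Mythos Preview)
The paper does not give a proof of this theorem: it is stated as a special case of the main theorem in \cite{WiseIsraelHierarchy}, and the surrounding paragraph merely explains \emph{why} that theorem applies---namely that $\pi_1X$ has a quasiconvex hierarchy coming from the graph-of-graphs decomposition induced by the $\vh$-structure, as in \cite{WiseFigure8}. Your proposal is essentially a fleshed-out version of precisely this explanation: you build the hierarchy by cutting along hyperplanes, argue quasiconvexity of the edge groups via convexity of hyperplane carriers and word-hyperbolicity, invoke \cite{WiseIsraelHierarchy}, and then deduce $\integers$-linearity and residual finiteness from virtual specialness. So your approach matches the paper's intended reasoning, just with more detail than the paper chooses to record.

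One small slip worth correcting: your claim that ``no vertical hyperplane crosses a horizontal one'' is false---in every square the two midcubes intersect, so a vertical hyperplane crosses a horizontal hyperplane in each square it passes through. What is true (and what you presumably meant) is that a hyperplane dual to a vertical edge is dual only to vertical edges, so two \emph{vertical} hyperplanes never cross, and likewise for horizontal ones. Fortunately this error is harmless for your argument: in a square complex every hyperplane is already $1$-dimensional (a graph), so the edge groups are free regardless, and the rest of your hierarchy argument goes through unchanged.
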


The feature of compact nonpositively curved $\vh$-complexes that enables us to apply Theorem~\ref{thm:from ultra} is that $\pi_1X$ has a so-called quasiconvex hierarchy, which means that it can be built from trivial groups by a finite sequence of HNN extensions and amalgamated free products along quasiconvex subgroups. This type of hierarchy occurs for fundamental groups of $\vh$-complexes because they split geometrically as graphs of graphs as has been explored for instance in \cite{WiseFigure8}.

\section{Squaring polygonal complexes}\label{sec:squaring}
A \emph{bicomplex} is a combinatorial $2$-complex $X$ such that the $1$-cells of $X^1$ are partitioned into two sets called \emph{vertical} and \emph{horizontal}, and the attaching map of each $2$-cell traverses both vertical and horizontal $1$-cells.
We note that each such attaching map is thus a concatenation
$V_1H_1\cdots V_rH_r$ of nontrivial paths where each $V_i$ is the concatenation of vertical $1$-cells and likewise each $H_i$ is a horizontal path.
We refer to the $V_i$ and $H_i$ as \emph{sides} of the 2-cell,
so if the attaching map is
$V_1H_1\cdots V_rH_r$ then
the 2-cell has $2r$ sides.

$X$ has  a \emph{repeated $\vh$-corner} if there is a concatenation $vh$ of a single horizontal and single vertical $1$-cell
that occurs as a ``piece'' in $X$, in the sense that it occurs in two ways as a subpath of attaching maps of $2$-cells. (Specifically, $vh$ or $h^{-1}v^{-1}$ could occur in two distinct attaching maps, or in two ways within the same attaching map.)

A $2$-cell of $X$ with attaching map $V_1H_1\cdots V_rH_r$ satisfies the \emph{triangle inequality} if for each $i$ we have:
$$ |V_i| \leq \sum_{j\neq i} |V_j|   \ \ \text{and} \  \  |H_i| \leq \sum_{j\neq i} |H_j|$$

The goal of this section is to prove the following:
\begin{thm}	[$\vh$ subdivision criterion]\label{thm:bicomplex subdivision}
Let $X$ be a bicomplex.
If each $2$-cell of $X$ satisfies the triangle inequality and if $X$ has no repeated $\vh$-corners,
then $X$ can be subdivided into a nonpositively curved $\vh$-complex.

\noindent\emph{(Hyperbolicity Criterion)} Moreover, if each $2$-cell has at least~$6$ sides, and $X$ is compact, then $\pi_1X$ is hyperbolic.
\end{thm}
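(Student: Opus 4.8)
The plan is to subdivide each $2$-cell of $X$ \emph{separately} into a $\vh$-square complex, to check nonpositive curvature of the result \emph{globally} using the absence of repeated $\vh$-corners, and then to obtain hyperbolicity from a combinatorial Gauss--Bonnet argument. As a preliminary step, subdivide every $1$-cell of $X$ once; this keeps $X$ a bicomplex, preserves the triangle inequality (all side lengths scale by $2$), and creates no new $\vh$-corner, so $X$ still has no repeated $\vh$-corner. Its only purpose is that now, for each $2$-cell with attaching map $V_1H_1\cdots V_rH_r$, the integers $\sum_i|V_i|$ and $\sum_i|H_i|$ are even.

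\emph{Subdividing one $2$-cell.} Fix a $2$-cell $C$ with $v_i=|V_i|$ and $h_i=|H_i|$. Because $v_1,\dots,v_r$ are nonnegative integers of even sum satisfying $v_i\le\sum_{j\ne i}v_j$, one can place $\sum_i v_i$ points on the circle in $r$ consecutive blocks of sizes $v_1,\dots,v_r$ and choose a non-crossing perfect matching of them containing no chord within a single block; such a matching exists exactly under these hypotheses. Realizing these chords \emph{horizontally} inside the disk $C$, together with the analogous \emph{vertical} chords built from $h_1,\dots,h_r$, produces a grid-like decomposition of $C$ into unit squares whose $1$-cells carry the inherited horizontal/vertical labels; by construction every $2$-cell of the resulting subdivision $\hat C$ is an honest $\vh$-square, and $\hat C$ restricts on $\partial C$ to the given unit subdivision of the $V_i$ and $H_i$. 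I would in addition arrange the two matchings so that $\hat C$ has no repeated $\vh$-corner in its interior and has the simplest possible local behaviour at each of the $2r$ corners of the polygon $C$ (a single square at a convex corner, three at a reflex one). Glueing the complexes $\hat C$ along the subdivided $1$-skeleton yields the subdivision $X'$ of $X$. I expect arranging the two matchings so that all of these requirements hold at once to be the first of the two main difficulties; this is the systematic form of the construction Leary carried out by hand.

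\emph{Nonpositive curvature.} A $\vh$-complex is nonpositively curved precisely when no $\link(x)$ contains a cycle of length $2$. The decisive observation is that every corner of every square of $X'$ is a $\vh$-corner, so such a length-$2$ cycle is nothing but a repeated $\vh$-corner of $X'$ realized at $x$; hence it suffices to show $X'$ has no repeated $\vh$-corner. In such a repetition $vh$, each of $v$ and $h$ is a sub-edge of an original $1$-cell; if both occurrences lie in a single $\hat C$ they are excluded by the construction, and otherwise the two occurrences, pushed forward to $X$, give a repeated $\vh$-corner of $X$, contrary to hypothesis. The case in which $x$ is an original $0$-cell requires the most care, since $\link_{X'}(x)$ is then built from the corners contributed by all the incident $\hat C$'s together with the (sub-)edges at $x$; one checks there that a bigon forces the corner already to repeat in $X$.

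\emph{Hyperbolicity.} Assume now that every $2$-cell of $X$ has at least $6$ sides (so each $r\ge 3$) and that $X$ is compact. To prove $\pi_1X$ hyperbolic I would verify a linear isoperimetric inequality by combinatorial Gauss--Bonnet. Equip the polygonal complex $X$ with an angle structure: assign angle $\pi$ to each ``straight'' corner (one internal to some side $V_i$ or $H_i$) and suitable angles to the $2r$ ``bend'' corners of each polygonal $2$-cell $C$, so that the curvature $2\pi-\sum_{\text{corners of }C}(\pi-\text{angle})$ of $C$ is at most $-\varepsilon<0$, while the curvature $2\pi-\sum(\text{angles})$ of each vertex — computed around every locally injective cycle in its link — is at most $0$. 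The hypothesis that $C$ has at least $6$ sides provides the slack needed to make the $2$-cell curvatures negative, while the no-repeated-$\vh$-corner hypothesis (which forbids bigons among the bend-corners in the links of $X$) is what allows the vertex curvatures to remain nonpositive simultaneously. Granting such an assignment, Gauss--Bonnet applied to any reduced disk diagram $D\to X$ gives $2\pi=\sum_{\text{2-cells}}\kappa+\sum_{\text{interior vertices}}\kappa+\sum_{\partial D\text{-vertices}}\kappa\le -\varepsilon\operatorname{Area}(D)+\pi|\partial D|$, a linear isoperimetric inequality, so $\pi_1X$ is word-hyperbolic. I expect producing a consistent angle assignment — in particular controlling the vertices of $X$, where the repeated-corner hypothesis enters — to be the second main difficulty of the argument.
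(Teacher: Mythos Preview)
Your subdivision and nonpositive-curvature arguments follow the paper's route: subdivide $X^1$ to get even parities, use the triangle inequality to produce non-crossing matchings (what the paper calls \emph{admissible pairings}), and pass to the dual square complex $C'$.  One simplification: the ``first main difficulty'' you anticipate is not there.  Because the two families of matching arcs can be realized as straight chords of a convex polygon, any two meet at most once, so the dual $C'$ is automatically a $\cat$ square disk with no interior $2$-cycles; no special choice of matchings is needed.  The $2$-cycle check at $0$-cells on $X^1$ then reduces exactly, as you say, to the absence of repeated $\vh$-corners in $X$.

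Your hyperbolicity argument, however, is genuinely different from the paper's.  The paper works in the subdivided complex $X'$: since $\widetilde{X'}$ is $\cat$, the flat plane theorem reduces hyperbolicity to excluding an isometrically embedded $\Euclidean^2$, and a single subdivided cell $C'$ sitting in such a flat is shown to violate Gauss--Bonnet (its $\geq 6$ $\vh$-corners each contribute curvature $\tfrac{\pi}{2}$, giving $2\pi\geq 3\pi$).  You instead run a direct isoperimetric argument on the polygonal complex $X$, bypassing both the subdivision and the flat plane theorem.  In fact your ``second main difficulty'' also dissolves: the naive assignment of $\tfrac{\pi}{2}$ at each $\vh$-corner and $\pi$ at each straight corner already works.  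Each $2$-cell with $2r\geq 6$ sides then has curvature $(2-r)\pi\leq -\pi$, and for any immersed cycle of length $n$ in a link with $k$ bend-corners one has $k\leq 2n-4$ (for $n\geq 4$ trivially since $k\leq n$; for $n=3$ because bend-edges join $V$-ends to $H$-ends so an odd cycle cannot consist entirely of them; for $n=2$ because a bend--bend bigon is a repeated $\vh$-corner and a bend--straight bigon is impossible for type reasons), whence the angle sum is $\geq 2\pi$.  The paper's argument is shorter once the $\cat$ machinery is in hand; yours is more elementary and shows that hyperbolicity of $\pi_1X$ does not actually rely on the square subdivision.
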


\begin{exmp}
It is instructive to indicate some simple examples with no repeated $\vh$-corners but where Theorem~\ref{thm:bicomplex subdivision} fails
without the triangle inequality:
\begin{enumerate}
\item $\langle v,h \mid v^mh^n \rangle$
\item $ \langle v,h \mid v^mh^n, v^{-m}h^n \rangle$
\item  $\langle v,h \mid hv^mh^{-1}v^n \rangle$
\item  $\langle u,v,h \mid h^{-1}uhv^{-1}u^{-1},  h^{-1}vhu^{-1}v^{-1} \rangle$
\end{enumerate}
In each case, $X$ is the standard 2-complex of the presentation given above.
The first example is homeomorphic to a  nonpositively curved $\vh$-complex, but it does not have a $\vh$-complex subdivision that is consistent with the original $\vh$-decomposition of the 1-skeleton.
The second example has no nonpositively curved $\vh$-subdivision -- indeed, the group has torsion when $n,m\neq 0$.
The third example has a nonpositively curved $\vh$-subdivision exactly when $m=\pm n$.
The fourth example is word-hyperbolic. However, it does not have any $\vh$-subdivision. Let us sketch this last claim. Without loss of generality, we may assume that $h$ is horizontal, and then an application of the Combinatorial Gauss-Bonnet Theorem~\cite{GerstenShort91} shows that $u$ and $v$ are vertical. Each two cell in a subdivision has at most four points of positive curvature that must occur at the $\vh$-corners. If there were a $\vh$-subdivision, then each would have curvature exactly $\tfrac\pi 2$ and so the divided two-cell $C'$ would be a rectangular grid. This however is impossible as the length of the path $u$ must be strictly less than the length of the path $v^{-1}u^{-1}$.
\end{exmp}

The main tool used to prove Theorem~\ref{thm:bicomplex subdivision}
is a procedure that subdivides a single polygon $C$ into a $\vh$-complex,
as we can then subdivide $X^1$ and then apply this procedure to subdivide each $2$-cell of $X$.
We will therefore focus on a bicomplex that consists of a single $2$-cell $C$ that we refer to as a polygon.
The observation is that when $C$ satisfies the triangle inequality, there is a pairing  between the vertical edges on $\partial C$, and also a pairing between the horizontal edges on $\partial C$ (the exact type of  ``pairing'' we need is described below).
We add line segments to $C$ that connect the midpoints of paired edges,
and our $\vh$-subdivision of $C$ is simply the dual (see Figure~\ref{fig:TheDual}).

\begin{defn}[Admissible Pairing]
	Let $P$ be a polygon and let $E_1,\dots,E_k$ be distinct sides of $P$. Suppose each edge $E_i$ is subdivided into $|E_i|$ edges. An \emph{admissible pairing} with respect to these sides is an equivalence relation $\sim$ on the subdivided edges satisfying the following:
\begin{enumerate}
	\item If $u\sim v$, $u\in E_i$ and $v\in E_j$ then $i\not= j$.
	\item If $u\sim v$ and $w\sim x$, then $w$ and $x$ lie in the same connected component of $P\setminus u\cup v$.
	\item Each equivalence class has two members.
\end{enumerate}
\end{defn}

We now demonstrate that the triangle inequality implies, and is in fact equivalent to the existence of an admissible pairing.

\begin{lem}\label{main}
	Suppose that $E_1,\dots,E_r$ are sides of a polygon, that each $E_i$ is subdivided into $n_i$ edges,
 and that $\sum n_i$ is even. Then there exists an admissible pairing of these vertices if and only if for each $i$, the inequality $n_i \leq \sum_{j\not= i} n_j$ holds.
\end{lem}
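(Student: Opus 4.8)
The plan is to handle the two implications separately, and it helps to note first that, writing $N=\sum_j n_j$, the triangle inequality ``$n_i\le\sum_{j\ne i}n_j$ for every $i$'' is equivalent to ``$n_i\le N/2$ for every $i$''.

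For the forward implication, suppose $\sim$ is an admissible pairing. By axioms~(1) and~(3) it is a perfect matching of the $N$ subdivided edges in which the partner of any subdivided edge of $E_i$ lies outside $E_i$. Thus the $n_i$ subdivided edges of $E_i$ are matched injectively into $\bigcup_{j\ne i}E_j$, a set with $\sum_{j\ne i}n_j$ elements, giving $n_i\le\sum_{j\ne i}n_j$; axiom~(2) is not needed for this direction.

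For the converse I would induct on $N$, which is even by hypothesis and drops by $2$ at each step. Discard first every side with $n_i=0$: this changes neither $N$ nor the inequalities, and an admissible pairing of the resulting polygon is one of the original, so assume all $n_i\ge 1$. If $N=0$, the empty relation works; so let $N\ge 2$, whence $r\ge 2$. The crucial point is to pick a vertex $p$ of the polygon at which two cyclically consecutive (hence nonempty) sides $E_i,E_{i+1}$ meet, \emph{and} such that $n_j<N/2$ for every remaining side $E_j$. This is always possible: if no side has size $N/2$, any vertex works, since $n_j\le N/2$ then forces $n_j<N/2$; and if some side $E_k$ has $n_k=N/2$, choose $p$ to be an endpoint of $E_k$, so that $E_k$ is one of the two sides at $p$ and the other sides have total size at most $N/2-1$, hence each is $<N/2$ (when $r=2$ the hypothesis already forces $n_1=n_2=N/2$ and there are no other sides). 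Now match the last subdivided edge of $E_i$ with the first subdivided edge of $E_{i+1}$ --- these are consecutive among the subdivided edges of $E_1,\dots,E_r$ around $\partial P$ --- and delete this pair. The new polygon has total $N-2$, still even, and satisfies the triangle inequality, because the reduced sizes of $E_i$ and $E_{i+1}$ are at most $N/2-1=(N-2)/2$ and every other side has size $<N/2$, hence $\le(N-2)/2$. By induction it carries an admissible pairing $\sim'$, and we enlarge it by the one new pair. Axioms~(1) and~(3) are clear; for axiom~(2), which asks that the chords joining midpoints of paired edges be pairwise non-crossing, observe that the new chord joins the midpoints of two subdivided edges consecutive around $\partial P$, so it cuts off a sub-disk of $P$ meeting no other subdivided edge, whence it crosses no chord of $\sim'$; and the chords of $\sim'$ stay pairwise non-crossing since the deletion preserves the cyclic order of the surviving subdivided edges.

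The step I expect to be the real obstacle --- and where the triangle inequality is genuinely used --- is the choice of the vertex $p$. Matching an arbitrary pair of consecutive subdivided edges from two different sides can wreck the inequality for the reduced polygon, precisely when some \emph{third} side $E_k$ has $n_k=N/2$: lowering $N$ to $N-2$ without touching $E_k$ leaves $n_k>(N-2)/2$. The observation that a side of maximal size $N/2$ must be incident to the deleted pair is exactly what makes the induction close.
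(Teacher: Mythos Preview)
Your proof is correct and follows the same overall scheme as the paper: reduce inductively by pairing two subdivided edges that are adjacent along $\partial P$ but lie on distinct sides, checking that the triangle inequality survives the reduction. The forward direction is identical.

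The only genuine difference is in the rule for choosing the pair. The paper always selects a side $E_i$ with $n_i$ maximal and pairs its outermost unpaired edge with the nearest edge on an adjacent side; this guarantees that one of the two decremented components is maximal, from which the preservation of the triangle inequality is ``easy to verify.'' You instead allow any vertex $p$ provided that every side \emph{not} incident to $p$ satisfies the strict bound $n_j<N/2$, and you observe that this is automatic unless some side sits exactly at $N/2$, in which case you force $p$ to be one of its endpoints. Both choices achieve the same thing --- they prevent an untouched side from violating the inequality after $N$ drops by~$2$ --- but yours is phrased directly in terms of the threshold $N/2$ rather than the maximum. Your argument is also more explicit about axiom~(2): the paper dispatches non-crossing with the phrase ``by convexity considerations,'' whereas you spell out that the new chord cuts off a region containing no other subdivided edge. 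Finally, your base case is $N=0$, while the paper stops the reduction at the all-ones vector and appeals to parity there; your version is a little cleaner since it avoids that separate endgame.
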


\begin{figure}\centering
\includegraphics[width=190pt]{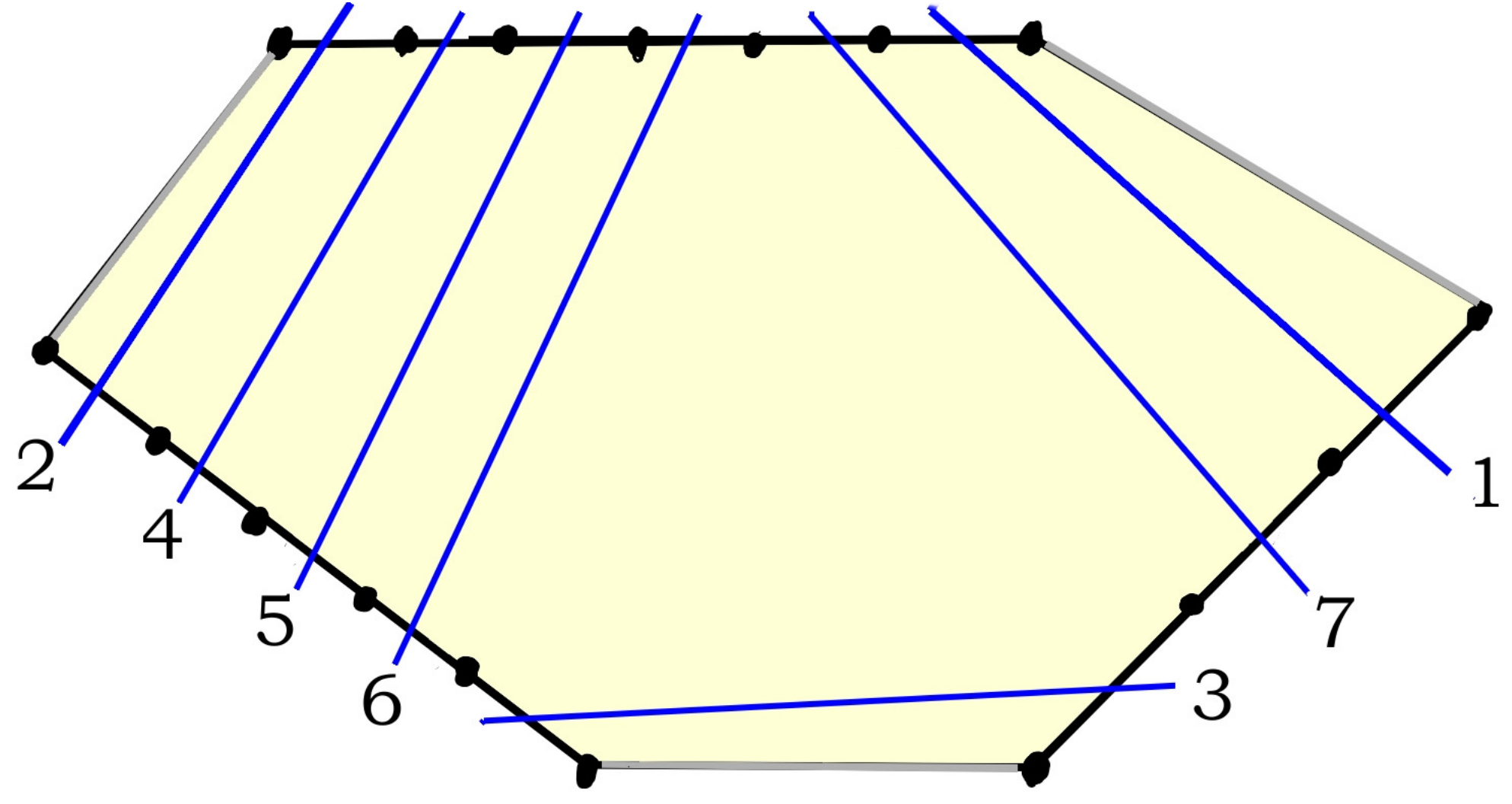}
\caption{\small The sequence of paired edges is indicated by the numbered line segments above.}\label{fig:pairing}
\end{figure}

\begin{proof}
	Suppose that $E_1,\dots, E_r$ are ordered clockwise around the polygon.	We construct the admissible pairing via steps, each step pairing two edges. Each step will consist of choosing $E_i$ with $|E_i|$ maximal, taking an unpaired edge $v\in E_i$ closest to a vertex of the edge, and pairing it with an edge $u$ in some $E_j$ closet to $v$ with $i\not= j$.

By convexity considerations, if we manage a pairing that satisfies (1) and (3) using such a procedure, it will also be admissible. Suppose we represent the number of unpaired edges in each step of this pairing by an ordered vector so that before any edges are paired, our vector is $(n_1,\dots,n_r)$. Using our pairing strategy, each step will consist of subtracting $1$ from two components, one component being maximal.

	It thus suffices to prove that any vector $(n_1,\dots,n_r)$ can be completely reduced to the zero vector if and only if
the triangle inequality $n_i \leq \sum_{j\not= i} n_j$ holds for each $i$.
Suppose that there is an admissible pairing. If the triangle inequality does not hold, then there exists an $i$ such that $n_i > \sum_{j\not=i} n_j$. But each vertex in $V_i$ must be paired with a vertex outside of $V_i$, and this is obviously impossible.

	Now suppose the triangle inequality holds. If $n_i = 1$ for all $i$, then there is an admissible pairing since $\sum n_i$ is even. Otherwise, choose an $i$ such that $n_i \geq n_j$ for each $j$. Subtract $1$ from $n_i$ and an adjacent component. It is easy to verify that the triangle inequality holds in the new vector. Eventually we will get a vector with all entries being $1$ whose sum is even, and so there is an admissible pairing. See Figure~\ref{fig:pairing} for an example of this algorithm applied to a polygon.
\end{proof}

We now prove the main result.
\begin{proof}[Proof of Theorem~\ref{thm:bicomplex subdivision}]

	\noindent\emph{($\vh$ Subdivision Criterion)} By subdividing $X^1$, we can ensure that for each $2$-cell of $X$ with attaching map $V_1H_1\cdots V_rH_r$,
both $\sum |V_i|$ and $\sum |H_i|$ are even.
Note that the triangle inequalities are preserved by subdivision.

By Lemma~\ref{main}, for each $2$-cell $C$, there is an admissible pairing for the vertical $1$-cells of $C$,
and an admissible pairing for the horizontal $1$-cells of $C$.
These pairings form a collection of line segments  within $C$ that join barycenters of paired edges,
and we let $\Gamma$ be the graph consisting of the union of these lines segments.
As in Figure~\ref{fig:TheDual}, the dual of $\Gamma$ within $C$ forms the 1-skeleton of our desired $\vh$-complex $C'$.
We note that $\partial C$ embeds as a subgraph of this dual, and $\partial C \cong \partial C'$.

In this way, we subdivide each $2$-cell of $X$ to obtain a 2-complex $X'$ whose $2$-cells are the subdivisions $C'$ of the various
$2$-cells $C$. Observe that $X'$ is a $\vh$-complex since each $C'$ is a $\vh$-complex and the $\vh$-structure on
$\partial C'$ agrees with the  $\vh$-structure on $X^1$.

Finally, $X'$ is nonpositively curved precisely if each $\link(x)$  has no 2-cycles.
This is clear when $x$ lies in the interior of some $C'$.
When $x\in (X')^1=X^1$, a 2-cycle in $\link(x)$  corresponds precisely to a repeated $\vh$-corner.
\\~\\
\noindent\emph{(Hyperbolicity Criterion)}.  The compact nonpositively curved square complex $X'$ has universal cover $\widetilde X'$ which is $\mathrm{CAT}(0)$, and by Gromov's flat plane theorem \cite{Bridson95}, $\pi_1X'$ is hyperbolic if and only if  $\widetilde X'$ does not contain an isometrically embedded copy of $\Euclidean^2$ -- whose cell structure would be a finite grid in our case.

Observe that each square in a flat plane lies in some $C'$ in the flat plane. The interior $0$-cells of $C'$ must have valence exactly four, and the $0$-cells in the interior of a horizontal or a vertical side must have valence exactly three.

Each $0$-cell on a $\vh$-corner must have even valence either $2$ or $4$, but again the latter is impossible for the embedding in the flat plane shows that there is a repeated $\vh$-corner. Furthermore, since there are at least six sides, there are at least six $\vh$-corners. Combinatorial Gauss-Bonnet applied to $C'$ shows that
\begin{align*}
	2\pi &= \sum_{v\in\mathrm{Int}(C)}\kappa(v) + \sum_{v\in\partial(C))} \kappa(v)\geq 6\cdot\frac \pi 2  = 3\pi.
\end{align*}
  Thus $C'$ cannot lie in an infinite grid.
     \end{proof}

\begin{figure}\centering
\includegraphics[width=300pt]{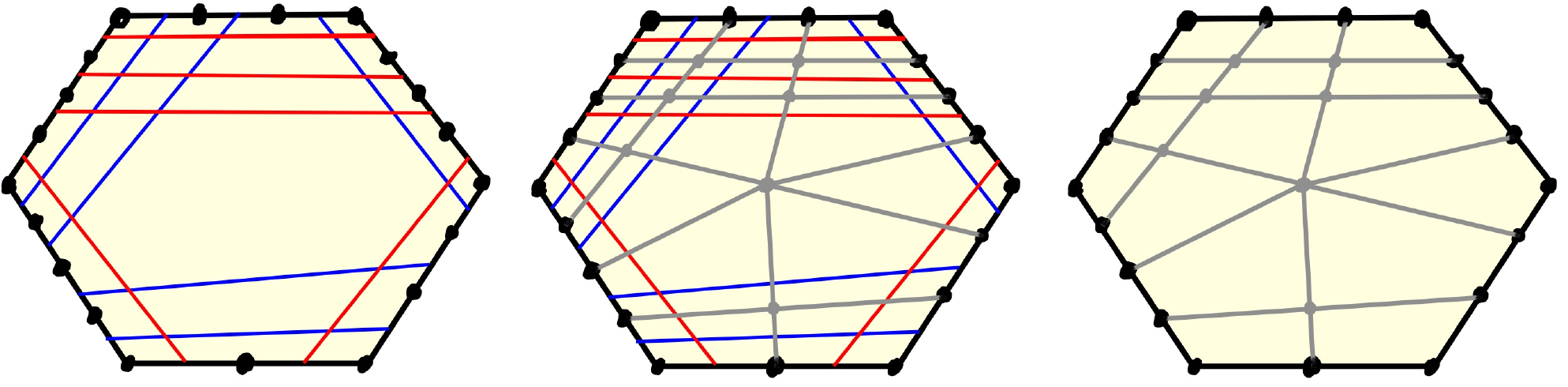}
\caption{\small The two systems of dual curves in $C$ are indicated on the left, the dual of $\Gamma$ is indicated in the middle,
  and the $\vh$-subdivision $C'$ is indicated on the right.}\label{fig:TheDual}
\end{figure}

\section{Application to Leary's examples}\label{sec:examples}
There are two examples from Leary's paper \cite{Leary_KanThurston} that we shall consider. The first example is the group:
\begin{equation}
	\left\langle a,b,c,d,e,f  \bigg|
  \begin{array}{ccc}
     abcdef, & ab\iv c^2 f\iv e^2 d\iv, & a^2fc^2bed, \\
    ad^{-2}cb^{-2}ef\iv, & af^{-2}cd\iv eb^{-2}, & ad^2cf^2eb^2
  \end{array}
\right\rangle
\end{equation}
Leary proved that this group is nontrivial, torsion-free, and acyclic, and asked whether it is also residually finite.

The standard 2-complex $X$ of this presentation is a bicomplex whose vertical $1$-cells correspond to $\{a,c,e\}$ and whose horizontal $1$-cells correspond to $\{b,d,f\}$. It is easily verified that the $2$-cells satisfy the triangle inequality.

Finally, this group is hyperbolic because each polygon has at least six sides, so we can apply the hyperbolicity criterion in Theorem~\ref{thm:bicomplex subdivision}. The group is therefore residually finite by  Theorem~\ref{thm:from ultra}.

A second family of examples with which Leary is concerned  is defined as follows. We let $n\in\mathbb{N}$ with $n\geq 4$, and for each $i\in \integers/n\integers$ we define the two words $A_i = a_ia_{i+2}a_i^{-2}a_{i+2}^{-1}a_i$ and $B_i = b_ib_{i+2}b_i^{-2}b_{i+2}^{-1}b_i$ and the eight words given by:
$$	a_iA_iB_iA_{i+1}B_iA_{i+2}B_iA_{i+3}B_i  \ \ \text{ and }  \ \ 	b_iB_iA_i^{-1}B_iA_{i+1}^{-1}B_iA_{i+2}^{-1}B_iA_{i+3}^{-1}
$$
This group is again hyperbolic by our hyperbolicity criterion (this also follows since the presentation is $C'(\frac 1 6)$). The triangle inequalities are readily verified, and there are no repeated $\vh$-corners by design.

{\bf Acknowledgement:} We are grateful to the referee for improving the exposition of this paper.

\bibliographystyle{alpha}
\bibliography{wise}
\end{document}